\documentclass[11pt,reqno]{amsart} 
\setlength{\hoffset}{-.5in}
\setlength{\voffset}{.25in}
\usepackage{amssymb,latexsym}
\usepackage{graphicx}
\usepackage{fancyhdr,amssymb}
\usepackage{url}		

\textwidth=6.175in
\textheight=8.5in

\theoremstyle{plain}
\numberwithin{equation}{section}
\newtheorem{thm}{Theorem}[section]
\newtheorem{theorem}[thm]{Theorem}

\newtheorem{example}[thm]{Example}
\newtheorem{definition}[thm]{Definition}
\newtheorem{proposition}[thm]{Proposition}
\newtheorem{corollary}[thm]{Corollary}
\newtheorem{comment}[thm]{Comment}

\pagestyle{fancy}

\begin{document}
\fancyhead{}
\renewcommand{\headrulewidth}{0pt}
\fancyfoot{}
\fancyfoot[LE,RO]{\medskip \thepage}
\fancyfoot[LO]{\medskip DECEMBER 2020}
\fancyfoot[RE]{\medskip VOLUME 58, NUMBER 5 }

\setcounter{page}{1}

\title[Recursive Triangles Appearing Embedded in Recursive Families]
{Recursive Triangles Appearing Embedded in Recursive Families}
\author{Russell Jay Hendel}
\address{Department of Mathematics, Towson University}
\email{rhendel@towson.edu}
 
\begin{abstract} 
 
We continue the work begun in OEIS sequence A332636 which presents recursive sequences that have  triangles that appear embedded in them.
 This paper i) generalizes the main result presented in A332636, ii)  provides a complete set of definitions
and underlying concepts, and iii) provides a complete proof.  
\end{abstract}

\maketitle

\section{ Illustrative Examples and Introduction}

This introductory section presents illustrative examples of a triangle  embedded in a recursive sequence, a
concept introduced  in \cite{1}.   This paper i) generalizes the main result presented in \cite{1}, ii)  provides a complete set of definitions
and underlying concepts, and iii) provides a complete proof. This paper is self-contained; no familiarity
with \cite{1} is assumed or needed.

We first provide some needed prerequisites and conventions.

Here and throughout the paper, we deviate from the textbook custom of having the leading coefficient
of a characteristic polynomial equal to one. Instead, we let the leading coefficient be minus one;
as a consequence, the coefficients of the characteristic polynomial  are identical with the coefficients
on the right-hand-side  of the corresponding recursion.

Here and throughout the paper, recursive sequences will be represented with either  the letter $G,$  or by  $\{G_i\}_{i \ge 1};$
characteristic polynomials will be represented by $p_k(X),$ where $k$ is the order of the corresponding recursion.

Here and throughout the paper, given a recursion of order $k,$ the initial
values are
\begin{equation}\label{ivalues}
	G_1 = 1, \qquad G_i = 0, \qquad 2 \le i \le k.
\end{equation}
As usual however, a  recursive sequence generated by a recursion with constant coefficients
 may be made doubly infinite. However, this will not be needed in the sequel.

Here and throughout the paper if $r, q$ are given positive integers with 
\begin{equation}\label{qge2}
	q \ge 2,
\end{equation}
we define 
\begin{equation}\label{kr}
	k=k(r) = 1 + rq.
\end{equation}

For the illustrative example presented in this section, we let $ q = 3.$ We consider the following recursions and associated characteristic polynomials
of orders $k(r), r=1,2,3.$ 
\begin{align}\label{4710}
 	\; & G_n = G_{n-4} - G_{n-3} - 25 G_{n-2} - G_{n-1}, &  p_4(X) = 2 - \frac{X^5-1}{X-1} - 24 X^2,   \nonumber \\
	\; & G_n = G_{n-7} - G_{n-6} - 25 G_{n-5} -\sum_{i=1}^4 G_{n-i}, &   p_7(X) = 2 - \frac{X^8-1}{X-1} - 24 X^2,  \\
	\; & G_n = G_{n-10} - G_{n-9} - 25 G_{n-8} -\sum_{i=1}^7 G_{n-i}, & p_{10}(X) = 2 - \frac{X^{11}-1}{X-1} - 24 X^2. \nonumber
\end{align}

The triangles that appear embedded in these sequences are found in rectangular arrangements of consecutive sequence members.
To describe these rectangles we must indicate i) where in the recursive sequences 
these consecutive members begin, ii) the number of rows, and iii) the number of columns involved.  
 
Given positive integers $r, q,$ (with $q$ satisfying \eqref{qge2}),  define $c(r),$ the number of columns, 
for the rectangle containing the triangle appearing embedded in the recursive sequence of order $k(r),$
  by 
\begin{equation}\label{cr}
	c=c(r) = 2+ (r-1)q.
\end{equation}

Given the recursions and initial values in  \eqref{4710} and \eqref{ivalues} respectively,
and using \eqref{kr} and \eqref{cr}, it is routine  to calculate
$G_{k(r)+1},  G_{k(r)+2}, \dotsc, G_{k(r)+rc(r)}.$ Tables 1-3 present these sequence members laid out as $r$ rows of $c(r)$ columns.
The row and column indices for the rectangle entries are  
\begin{equation}\label{rectangle}
	\langle G_{k(r)+1},  G_{k(r)+2}, \dotsc, G_{k(r)+r c(r)} \rangle = \langle R_{1,1}, R_{1,2}, \dotsc, R_{r, c(r)}  \rangle.
\end{equation}

\begin{center}
\begin{table}
\begin{tabular}{||c|c c||}
\hline \hline
$\text{Position}$	&	1	&	2 	\\
\hline							
$G_5=R_{1,1}, G_6=R_{1,2}$		&	1	&	-1	\\
\hline \hline
\end{tabular}
\begin{caption}{ $1 \times 2$ rectangle for the order $k(1)=4$ recursion} \end{caption}
\end{table}
\end{center}

\begin{center}
\begin{table}
\begin{tabular}{||c|c c c c c||}
\hline \hline
$\text{Position}$		&	1	&	2	&	3	&	4	& 	5	\\
\hline
$R_{1,1}, \dotsc, R_{1,c}$	&	1	&	-1	&	0	&	0	&	0	\\
$R_{2,1}, \dotsc, R_{2,c}$	&	-24	&	48	&	-22	&	-3	&	1	\\
\hline \hline
\end{tabular}
\begin{caption}{ $2 \times 5$ rectangle for the order $k(2)=7$ recursion} \end{caption}
\end{table}
\end{center}

\begin{center}
\begin{table}
\begin{tabular}{||c|c c c c c c c c||}
\hline \hline
$\text{Position}$		&	1	&	2	&	3	&	4	& 	5	&	6 &	7 &	8\\
\hline
$R_{1,1}, \dotsc, R_{1,c}$	&	1	&	-1	&	0	&	0	&	0	& 	0 & 	0 & 	0\\
$R_{2,1}, \dotsc, R_{2,c}$	&	-24	&	48	&	-22	&	-3	&	1	&	0 & 	0 & 	0\\
$R_{3,1}, \dotsc, R_{3,c}$	&	576	&	-1728	&	1632	&	-336	&	-188	&	40 & 	5 & 	-1\\
\hline \hline
\end{tabular}
\begin{caption}{ $3 \times 8$ rectangle for the order $k(3)=10
$ recursion} \end{caption}
\end{table}
\end{center}
	
These tables nicely illustrate the idea of a triangle  appearing embedded in these recursive sequences. 
The triangles associated with different recursions show compatibility; for example,
the two triangle  rows of the order-7 sequence are also the first two triangle  rows of the order-10 sequence with extra zeroes.

Throughout the paper, we will abuse language and refer to configurations similar to those in  Tables 1 - 3 as triangles or rectangles. This should cause
no confusion since the term rectangle refers to the entire rectangular array, while the term triangle  refers to the collection of rows
with the zeroes on the right side of these rows omitted.  
 
To present the complete definition of a triangle  appearing embedded in a recursive sequence, we need
one more ingredient. To motivate this ingredient  notice that we can always take any $rc(r)$ consecutive members of a recursive sequence and arrange them as a
rectangle. A key point in the above tables is that the triangle's right-hand side is delimited by a sequence of zeroes which terminate the rows (except the last). 
We therefore  introduce the function $l(t) $ equaling the rectangle position of the last non-zero element of the $t$-th row.  Given an $r$ and using \eqref{rectangle}, $l(t)$ must satisfy
\begin{equation}\label{lt}
	R_{t,l(t)} \neq 0,   1 \le t \le r,
\end{equation}
and 
\begin{equation}\label{lt2}
	  R_{t,u} = 0, l(t)<u \le c(r), 1 \le t \le r.
\end{equation}
Note that $l(t)$ is well defined since $l(t) = c(r)$ if no element in the $t-th$ row is zero.

We summarize the preceding discussion with the following definition.

\begin{definition} With notations as above, we say that  a recursive sequence   $\{G_i\}_{i\ge 0},$ of order $k=k(r),$
for some positive integer $r,$ has  a triangle that  appears embedded in the
sequence (at $G_{k+1}, \dotsc, G_{k+rc(r)}$) if \eqref{lt} and \eqref{lt2} hold and additionally
the sequence $l(1), l(2), \dotsc, l(r)$ is strictly increasing. 
Two sequences of orders $k_1=k(r_1)$ and $k_2=k(r_2)$ with   triangles appearing embedded in them are compatible  
 if $l_1(t) = l_2(t)=l(t), 1 \le t \le min(r_1,r_2),$ 
and $R_{t,s}^{(k_1)} = R_{t,s}^{(k_2)}$ for
$ 1 \le s \le l(t), 1 \le t \le min(r_1,r_2)$ 
(here, $R^{(k_i)}, i=1,2$ alnd $l_i(t), i=1,2$ are the rectangles 
of  the order-$k_1$ and order-$k_2$ recursions respectively).  A family of recursive sequences appears to have a triangle  embedded in it if
all members of the family have triangles appearing embedded in them and every two members of the family are compatible. 
\end{definition}

To complete our introductory definitions we need  a way to talk about a family of recursive sequences.
The characteristic polynomials presented in \eqref{4710} nicely motivate the idea of using a Taylor series to indicate a family of recursive sequences.

\begin{definition} We associate to every  Taylor series, $T(X),$ a family of recursive sequences where the $k$-th member
of the recursive family satisfies the recursion corresponding to the $k$-th approximating Taylor polynomial regarded as a characteristic
polynomial, with initial values given by \eqref{ivalues}.\end{definition}

\begin{example} Let $T(X) = 2 - \frac{1}{1-X} - 24X^2.$ Then the orders 4,7, and 10 members of the associated family of recursive sequences
are the sequences generated by the initial values \eqref{ivalues} and the 
recursions presented in \eqref{4710}. \end{example}

We close this section by pointing out that the triangles that appear embedded in the recursive sequences   satisfy
a triangle recursion.  

\begin{proposition} If a doubly infinite recursive sequence 
satisfies a recursion of order $m$ whose characteristic polynomial is $p_m(X),$ then it also
satisfies the recursion associated with $(X-1)p_m(X),$ regarded as a characteristic polynomial.
\end{proposition}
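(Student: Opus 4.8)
The plan is to work directly from the definition of the recursion attached to a characteristic polynomial under the paper's sign convention. Recall that, with the leading coefficient normalized to $-1$, a characteristic polynomial $p_m(X) = -X^m + a_{m-1}X^{m-1} + \dots + a_1 X + a_0$ corresponds to the recursion $G_n = a_{m-1}G_{n-1} + \dots + a_1 G_{n-m+1} + a_0 G_{n-m}$; equivalently, a doubly infinite sequence $\{G_n\}$ satisfies the recursion if and only if $\sum_{j=0}^{m} (\text{coeff of } X^{m-j} \text{ in } -p_m(X)) \, G_{n-j} = 0$ for all $n$, i.e. the shift operator $E$ (defined by $(EG)_n = G_{n+1}$) annihilates the sequence when we apply $p_m(E)$ in the form $(-p_m(E))G = 0$. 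The cleanest framing is: $\{G_n\}$ satisfies the recursion associated with $p_m$ exactly when $q(E)\,G = 0$, where $q(X) = -p_m(X)$ is the monic polynomial whose non-leading coefficients are the negatives of the recursion coefficients; the sign convention is arranged precisely so that the bookkeeping between "coefficients of the characteristic polynomial" and "coefficients of the recursion" is transparent.

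First I would make the operator formalism precise: define the left-shift operator $E$ on doubly infinite sequences and observe that "$\{G_n\}$ satisfies the recursion associated with the characteristic polynomial $f(X)$" is equivalent to the single operator identity $f(E)\,G = 0$ (being careful that the sign convention makes this literally $f$ and not $-f$, since both sides are homogeneous the sign is immaterial anyway). Second, I would invoke the hypothesis in this language: $p_m(E)\,G = 0$. Third, the polynomial $(X-1)p_m(X)$, regarded as a characteristic polynomial, gives the operator $(E - 1)p_m(E) = (E-1)\circ p_m(E)$ by multiplicativity of the functional calculus for polynomials in a single operator (polynomials in $E$ commute and compose according to polynomial multiplication). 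Fourth, apply this to $G$: $\bigl((E-1)p_m(E)\bigr)G = (E-1)\bigl(p_m(E)G\bigr) = (E-1)\,0 = 0$. Hence $G$ satisfies the recursion associated with $(X-1)p_m(X)$, which is the claim.

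The only genuine point requiring care — and what I expect to be the main (minor) obstacle — is matching the sign convention and index conventions so that "satisfies the recursion associated with $f(X)$" really does translate to "$f(E)G = 0$" and that multiplying the characteristic polynomial by $(X-1)$ corresponds exactly to composing with the operator $(E-1)$, including the fact that the new recursion has order $m+1$ and its coefficients are read off from $(X-1)p_m(X)$ in the same way. Because the paper has deliberately set the leading coefficient to $-1$ so that "the coefficients of the characteristic polynomial are identical with the coefficients on the right-hand side of the recursion," this translation is essentially immediate, but I would spell out one line of it explicitly: if $p_m(X) = -X^m + \sum_{i=0}^{m-1} c_i X^i$ then the order-$m$ recursion is $G_n = \sum_{i=0}^{m-1} c_i G_{n-m+i}$, which rearranges to $\sum_{i=0}^{m} d_i G_{n-m+i} = 0$ with $d_m = -1$, $d_i = c_i$ — i.e. exactly $-p_m(E)$ applied to $G$ and shifted — and then note $(X-1)p_m(X)$ has the analogous expansion with leading coefficient $-1$, so its associated recursion is precisely $-\bigl((E-1)p_m(E)\bigr)G = 0$. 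Everything else is the trivial operator computation above; doubly-infiniteness is used only to make the shift operator $E$ (and hence $E-1$) well defined without boundary issues.
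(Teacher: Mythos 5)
Your argument is correct and is essentially the paper's own proof: both interpret the characteristic polynomial as a shift operator annihilating the sequence and observe that multiplying by $(X-1)$ corresponds to composing with another operator, which preserves annihilation. The only cosmetic difference is that you use the forward shift $E$ while the paper uses the backward shift $X(G_i)=G_{i-1}$; your extra care with the sign convention is a welcome addition but not a different method.
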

\begin{proof}
Interpret $X$ as the backward shift operator, that is, for any integer $i,$ $X(G_i) = G_{i-1}.$ Then since $p_m(X)$ regarded
as an operator annihilates any recursive sequence which it satisfies, therefore, $(X-1)p_m(X)$ also annihilates this sequence, because
operators are associative. Therefore, the sequence   also satisfies the recursion corresponding to $(X-1)p_m(X).$
\end{proof}

\begin{comment} The proposition can clearly be generalized with $X-1$ replaced by any polynomial with integer coefficients. However,
we will not need this in the sequel. 
\end{comment}

It follows that the doubly infinite sequences satisfying the three recursions presented in \eqref{4710} and satisfying \eqref{ivalues}\
also satisfy the recursions associated with $(X-1)p_{k(r)}(X).$ The resulting corresponding characteristic polynomials and triangle  recursions are as follows. 

\tiny
\begin{align}\label{T4710}
	\; & (X-1)p_4(X) =    2(X-1) - X^5+1 -24X^3 + 24X^2; & \qquad R_{t+1,s} = -24 R_{t,s} + 24 R_{t,s-1} + 2 R_{t,s-2} - R_{t,s-3} \nonumber \\   
	\; & (X-1)p_7(X) =    2(X-1) - X^8+1 -24X^3 + 24X^2; & \qquad R_{t+1,s} = -24 R_{t,s} + 24 R_{t,s-1} + 2 R_{t,s-2} - R_{t,s-3}  \\ 
	\; & (X-1)p_{10}(X) = 2(X-1) - X^{11}+1 -24X^3 + 24X^2; & \qquad R_{t+1,s} = -24 R_{t,s} + 24 R_{t,s-1} + 2 R_{t,s-2} - R_{t,s-3}. \nonumber   
\end{align}
\normalsize

The reader can easily verify that the rectangles presented in Tables 1 - 3 satisfy the recursions given in 
 \eqref{T4710} for $1 \le t \le r, 1 \le s \le c(r),$ with
the obvious boundary conditions 
\begin{equation}\label{boundary}
	R_{0,u} = 0, \qquad 1 \le u \le c(r), \qquad R_{t,1-s} = R_{t-1,c(r)+1-s}, 2 \le t \le r, 1 \le s \le c(r).  
\end{equation} 
The conditions $R_{0,u}=0, 1 \le u \le c(r)$ are consistent with \eqref{ivalues} and with the fact that by \eqref{qge2}, \eqref{kr} and \eqref{cr},
$c(r)  \le  k(r)-1$ assuring that the set $\{G_2,\dotsc, G_{k(r)}\}$ has as least $c(r)$ zero values. 

\section{The Main Theorem}

This section presents the main theorem which fully generalizes the examples of the preceding section.
 The remaining sections of the paper will prove this
theorem.

\begin{theorem} Let  $q$ satisfying \eqref{qge2} and $a_i$  be integers satisfying
\begin{equation}\label{qap}
	  	a_i \ge 1, 2 \le i \le q, 		\qquad  		a_q \ge 2.
\end{equation}
For positive integer $r$ define $k(r)$ and $c(r)$ by \eqref{kr} and \eqref{cr} respectively.
Consider the sub-family of recursive sequences of orders $k(r), r=1,2,3, \dotsc,$  associated with the Taylor Series
\begin{equation}\label{TX}
	T(X) = 2 - \frac{1}{1-X} - \displaystyle \sum_{i=2}^q  (a_i-1) X^{ i -1}.
\end{equation}
Then  with the notation of   \eqref{rectangle}, we have \\
i)
\begin{equation}\label{firstrow}
	G_{k(r)+1}=R_{1,1} =1, G_{k(r)+2}=R_{1,2} = -1, G_{k(r)+u} = R_{1,u} = 0, 3 \le u \le c(r);
\end{equation} \\
ii) The triangle  entries $R_{i,j}, 2 \le i \le r, 1 \le j \le c(r),$ satisfy both the  underlying recursion of order $k(r)$ (which in the
sequel we will call the $G$ recursion)  and the following
triangle  recursion  (which in the sequel we will call the $T$ recursion)
\begin{equation}\label{trianglerecursion}
	R_{i,j} = -(a_q-1) R_{i-1,j} - \biggl(  (a_{q-2}-a_{q-1}) R_{i-1,j-1} + \dotsc \biggr) +(1 + a_2  ) R_{i-1,j-q+1} - R_{i-1,j-q} 
\end{equation} 
  with the boundary conditions given by \eqref{boundary}; \\ 
iii) For $1 \le t \le r,$
\begin{equation}\label{rt1neq0}
	R_{t,1} \neq 0;
\end{equation}
iv) equations \eqref{lt} and \eqref{lt2} are satisfied for $1 \le t \le r$
by the function 
\begin{equation}\label{ltkr}
	l(t) = 2+ (t-1)q;  
\end{equation}
  in other words, \eqref{ltkr} gives the correct functional form for
 identifying the last non-zero element in each row; \\
v) this family of recursive sequences appears to have a triangle embedded in it. 
\end{theorem}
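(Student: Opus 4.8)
The five parts are interlocked: once i)--iv) are proved, part v) is a short formal consequence of the Definitions, so I would concentrate the work on i)--iv). For part i), I would write out the order-$k(r)$ recursion explicitly. The degree-$k(r)$ Taylor polynomial of $T(X)$ is
\[
p_{k(r)}(X) \;=\; 1 - X - X^{2} - \dots - X^{k(r)} \;-\; \sum_{i=2}^{q}(a_i-1)X^{i-1},
\]
so, in the paper's convention (leading coefficient $-1$), the recursion reads $G_n = G_{n-k} - \sum_{i=k-q+1}^{k-1}a_{k-i+1}G_{n-i} - \sum_{i=1}^{k-q}G_{n-i}$. Substituting the initial values \eqref{ivalues} gives $G_{k+1}=1$ and $G_{k+2}=-1$ at once, and then $G_{k+u}=0$ for $3\le u\le c(r)$ follows by induction on $u$: in the expansion of $G_{k+u}$ every term is a known zero or is zero by the inductive hypothesis, except that $G_{k+1}$ and $G_{k+2}$ each occur once inside $-\sum_{i=1}^{k-q}G_{k+u-i}$, and $G_{k+1}+G_{k+2}=0$. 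Throughout one uses the arithmetic identities $c(r)=k(r)+1-q$ and $c(r)\le k(r)-1$ (the latter from \eqref{qge2}), which also reduce \eqref{boundary} to a transcription of how the rectangle sits inside the sequence: in particular $R_{0,u}$ is $G_{u+q-1}$, an index in $[q,k]$, hence $0$.

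For part ii), the $G$-recursion holds automatically since the rectangle entries are consecutive terms of $G$. For the $T$-recursion I would invoke the Proposition to pass to the recursion of $(X-1)p_{k(r)}(X)$ and then use the factorization
\[
(X-1)p_{k(r)}(X) \;=\; P(X) - X^{k(r)+1},\qquad P(X):=2(X-1)+1-(X-1)\sum_{i=2}^{q}(a_i-1)X^{i-1},
\]
in which $P$ has degree $q$ and, crucially, \emph{does not depend on $r$}; the only $r$-dependent piece is the single monomial $X^{k(r)+1}$. Since $k(r)+1=c(r)+q$, rewriting the backward shift $X^{c(r)}$ as a move up one row in the rectangle (so that $G_{n-c(r)-j}=R_{t-1,s-j}$ when $G_n=R_{t,s}$) turns the recursion of $-X^{k(r)+1}+P(X)$ into exactly \eqref{trianglerecursion}, with \eqref{boundary} as its boundary data; the coefficients $-(a_q-1),\dots,1+a_2,-1$ are just the coefficients of $P$ read off in reverse order.

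For parts iii) and iv) I would run one strong induction on $t$, establishing for $1\le t\le r$: (a) $R_{t,s}=0$ for $2+(t-1)q<s\le c(r)$; (b) $R_{t,2+(t-1)q}\neq 0$; (c) $R_{t,1}\neq 0$. The base case $t=1$ is part i), and row $0$ is identically zero by \eqref{boundary}. In the inductive step every wrap-around estimate comes down to the single inequality $c(r)+1-q>2+(t-2)q$, valid for $t\le r$ because the difference equals $(r-t)q+1\ge 1$. Hence, whenever \eqref{trianglerecursion} is applied at a column $s>2+(t-1)q$, at $s=2+(t-1)q$, or at $s=1$, every term referring to an earlier row is either already known to vanish (a column beyond the last nonzero column of that row, possibly reached by a wrap-around) or is the unique surviving entry $R_{t-1,2+(t-2)q}$ or $R_{t-1,1}$. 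This gives (a) for $t$, together with
\[
R_{t,2+(t-1)q} = -R_{t-1,2+(t-2)q} = (-1)^{t},\qquad R_{t,1} = -(a_q-1)\,R_{t-1,1} = \bigl(-(a_q-1)\bigr)^{t-1},
\]
both nonzero because $a_q\ge 2$ by \eqref{qap}; this is (b) and (c), in particular \eqref{rt1neq0}. Part iv) is then immediate: $l(t)=2+(t-1)q$ as in \eqref{ltkr} is precisely the position of the last nonzero entry in row $t$, so \eqref{lt}--\eqref{lt2} hold.

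For part v): by iv) the conditions \eqref{lt}--\eqref{lt2} hold for every member of the family, and $l(1)<l(2)<\dots$ since $q\ge 2$, so every member has a triangle appearing embedded in it. For compatibility of two members of orders $k_1=k(r_1)$ and $k_2=k(r_2)$ with $r_1\le r_2$, the functions $l$ agree because the formula $2+(t-1)q$ contains no $r$, and $R^{(k_1)}_{t,s}=R^{(k_2)}_{t,s}$ for $1\le s\le l(t)$, $1\le t\le r_1$, follows by induction on $t$: the base case is part i), and in the step row $t-1$ agrees in the two rectangles at all columns they have in common, since it is zero beyond column $l(t-1)$ in both by (a); then applying the $r$-independent recursion \eqref{trianglerecursion}, whose relevant boundary terms also vanish in both, reproduces the same row $t$ up to column $l(t)$. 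Hence all members are pairwise compatible, and v) follows. The genuine obstacle, I expect, is the inductive core of iii)--iv): organizing the bookkeeping among the column count $c(r)$, the last-nonzero positions $l(t)$, the recursion depth $q$, and the wrap-around conventions of \eqref{boundary} so that \eqref{trianglerecursion} really does collapse to a two-term relation between consecutive rows. The rest is a finite computation (part i), the coefficient matching in part ii)) or a purely formal unwinding of the Definitions (part v)).
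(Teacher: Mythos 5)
Your proposal is correct and follows essentially the same route as the paper: part i) by writing out the order-$k(r)$ recursion and inducting on the column with the cancellation $G_{k+1}+G_{k+2}=0$, part ii) by passing to $(X-1)p_{k(r)}(X)$ via the Proposition and re-indexing through $k(r)+1-q=c(r)$, and parts iii)--iv) by induction on the row $t$ using the triangle recursion together with the wrap-around boundary conditions \eqref{boundary}. The only notable difference is that you spell out the compatibility argument for part v) (the $r$-independence of the triangle recursion and its boundary data forcing equality of rows across family members), a step the paper simply asserts as an immediate consequence of Definition 1.1.
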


\begin{comment} 
By Definition 1.1,  i)-iv) implies (v). We will devote one section each to the proofs
of i) and  ii) and then one section for the proof of both (iii) and (iv).
\end{comment}

This main theorem has been amply illustrated in Section 1. 
The  main theorem generalizes the result in [1] since the examples in [1]
correspond to the special case where only one of the $a_i, 2 \le i \le q,$ is greater than 1, 
while  the main theorem   allows 
several $a_i$ to be greater than 1. 

\section{The First Row}

The purpose of this section is to prove \eqref{firstrow}.  
To proceed with the proof we fix an integer $r;$  
 by the conventions of \eqref{kr} and \eqref{cr}, $k$ and $c$ refer to $k(r)$ and $c(r)$ respectively

To compute values of $G$ we need the underlying recursion which by Definition 1.2 is the recursion associated with its 
characteristic polynomial, $p_{k}(X),$ which in turn is the approximating Taylor polynomial (of degree $k$) to the Taylor series, \eqref{TX}.
Therefore, the characteristic polynomial is given by 
\begin{equation}\label{cp}
	p_{k}(X) = 2 - \frac{X^{k+1}-1}{X-1} -  \sum_{i=2}^q  (a_i-1) X^{ i -1},
\end{equation}
implying that the recursion is given by 
\begin{equation}\label{temp1}
	G_n = G_{n-k} - \biggl(  a_2 G_{n-k+1} + \dotsc + a_q G_{n-k+q-1} \biggr) - \biggl( G_{n-k+q} + G_{n-k+q+1} \dotsc +G_{n-1}\biggr).
\end{equation}
By \eqref{kr}, $k>q$ and in fact  

\begin{equation}\label{temp2}
		\text{The coefficients of the last $k-q = (r-1)q+1$ summands on the right-hand side of \eqref{temp1} are -1}.
\end{equation}

We prove \eqref{firstrow} by computing each of the values $G_{k+u}, 1 \le u \le c(r).$

\textbf{The value of $G_{k+1}.$} Letting $n=k+1$ in \eqref{temp1} shows $G_{k+1} = 1$  since by \eqref{ivalues}, $G_{n-k}=G_1=1,$ but
$G_i=0, 2 \le i \le k.$

\textbf{The value of $G_{k+2}.$} Letting $n=k+2$ in \eqref{temp1} and noting that, by \eqref{temp2}, the coefficient of $G_{n-1}$ in 
\eqref{temp1} is -1,
we see that $G_{k+2} = -1,$ since by \eqref{ivalues} $G_i=0, 2 \le i \le k.$

If $r=1,$ then by \eqref{cr}, $c=2$ and we have completed the proof of \eqref{firstrow}. Therefore, for
the rest of the proof we assume $r \ge 2.$ 

\textbf{The value of $G_{k+3}$}. Let $n=k+3$ in \eqref{temp1}. By our assumption on $r$ and \eqref{temp2}, the last two coefficients
in \eqref{temp1} are minus 1. Therefore $G_{k+3} = - G_{n-2}-G_{n-1} = -(G_{k+2} + G_{k+1}) = 0 $ by \eqref{ivalues} and
by the results we just proved for $G_{k+1}, G_{k+2}. $

\textbf{The value of $G_{k+u}, 4 \le u \le c.$}  Using an induction assumption, assume $G_{k+u} = 0, 3 \le u \le v-1 \le c-1,$ the base case 
when $v=4$ having just been proven. We proceed to prove $G_{k+v}=0.$ Let $n=k+v$ in \eqref{temp1}. $G_{k+v}=0$ iff both 
 $G_{n-(v-1)}=G_{k+1}$ and  $G_{n-(v-2)}=G_{k+2}$ have a coefficient of minus 1 in \eqref{temp1} since then by \eqref{ivalues} and our
induction assumption $G_{k+v} = -G_{n-(v+2)} - G_{n-(v+1)} = -(G_{k+1}+G_{k+2}) = 0.$ The proof is therefore completed by \eqref{temp2}.

This completes the proof of \eqref{firstrow}.

\section{The Triangle  Recursion}

In this section, we prove \eqref{trianglerecursion}.

Fix  $r \ge 2$. The characteristic polynomial for the member of the recursive family of degree k is given by \eqref{cp}. 

By Proposition 1.4, the recursive sequence of this family member also satisfies 
\begin{equation}\label{temp32}
	(X-1)p_k(X) = 2(X-1) - (X^{k+1}-1) - (X-1)  \displaystyle \sum_{i=2}^q (a_i-1) X^{i-1}.
\end{equation} 
In the sequel, we will speak about  the three summands in \eqref{temp32} with the understanding that  
the first summand refers to $2(X-1),$
the second summand refers to the parenthetical expression, $(X^{k+1}-1) $ 
and  the third summand refers to the product of $(X-1)$ with the sum.

We  now calculate the coefficients of  $X^u, 0 \le u \le k+1,$ in \eqref{temp32}. 
\begin{itemize}
\item Clearly, the highest exponent occurring in   \eqref{temp32} is {k+1}; the coefficient of $X^{k+1}$ is -1, since 
by \eqref{kr}, $q < k-1$ and therefore only the second summand
contributes to this coefficient.
\item The second biggest exponent occurring in \eqref{temp32} is $q;$ the coefficient of $X^q$ is $-(a_q-1).$
\item The coefficient of $X^0$ is -1 since the contributions from the three summands are -2, 1, and 0 respectively.
\item The coefficient of $X^1$ is $1+a_2$ since the first and third summands contribute 2 and $a_2 - 1$ respectively.
\item The coefficient of $X^e,$ for $2 \le e \le q-1,$ is $-(a_e - a_{e+1})$ since only the third summand contributes. 
\end{itemize}

Therefore, the recursion corresponding to $(X-1)p_k(X)$ is
\begin{equation}\label{temp33}
	G_n = -(a_q-1) G_{n-(k+1-q)} - \biggl(( a_{q-1} - a_q) G_{n-(k+1-(q-1))} \dotsc \biggr) + (1+a_2) G_{n-(k+1-1)} - G_{n-(k+1)}
\end{equation}

To complete the proof of \eqref{trianglerecursion}, we must convert \eqref{temp33} into \eqref{trianglerecursion}.
First observe, that by \eqref{kr} and \eqref{cr}
$$
	k + 1 - q = (1+rq)+1-q = 2+(r-1)q = c.
$$
It immediately follows that 
\begin{equation}\label{temp35}
	G_{n-(k+1-q)} = G_{n-c}.
\end{equation}
Equation \eqref{temp35} motivates the idea of representing the linear sequence by a rectangular array, since to compute $G_n$ instead of
going back $k+1-q$ columns one only need go up one row provided the row lengths are $c.$ 
This motivation will be made precise in the corollary below.

First however,  we complete the proof. Let $n=k+u$ for some $u,  c+1 \le u \le rc.$ Then by \eqref{rectangle}
$$
	G_n = R_{t,s}, \qquad \text{for some $t,s$ with $2 \le t \le r, 1 \le s \le c$}
$$
But then, using \eqref{temp35}, 
\begin{equation}\label{temp36}
	G_{n-(k+1-q)} = R_{t-1,s},
\end{equation}
since the lengths of all rows are $c.$
Equation \eqref{temp36} implies
$$
	G_{n-(k+1-q)-u} = R_{t-1,s-u}, \qquad 1 \le u \le q,
$$
and this completes the proof of \eqref{trianglerecursion}.
 
\begin{corollary} Computation of $G_n$ requires: 
\begin{itemize}
 \item A lookback of $k$ columns and $k$ multiplications using the $G$ recursion, \eqref{temp1};  
 \item A lookback of $q$ columns one row up and $q+1$ multiplications using the $T$ recursion, \eqref{trianglerecursion}.
\end{itemize}
\end{corollary}

A key point of this corollary is that $k$ is going to infinity while $q$ is constant.

\section{Completion of Proof of the main theorem}

In this section, we prove (iii) and (iv) of the main theorem. 
We first review what is given and what has to be proven.
We are given a positive integer $q$ satisfying \eqref{qge2},
coefficients $a_i, 2 \le i \le q,$  satisfying \eqref{qap}, and  a power series \eqref{TX}. 
Using $q,$ we define $k$ and $c$ using \eqref{kr} and \eqref{cr}.  These numbers are given and fixed.

Given positive integers $t$ and $r$ with $1 \le t \le r,$  
 we must prove, using \eqref{rectangle}, that \eqref{rt1neq0} holds and that
\eqref{lt} and \eqref{lt2} are satisfied using the function presented in \eqref{ltkr}.
To further clarify this, note that in Section 1, we defined $l(t)$ in terms of its properties,
\eqref{lt} and \eqref{lt2}; it is the rectangle-column
position of the last non-zero entry in the $t$-th row. What we must prove 
is that for each $t,$ the functional form for $l(t)$ presented in \eqref{ltkr}  
has these defined properties.

 The proof will be by induction on $t.$ 

For a base case, let $t=1.$  By \eqref{ltkr}, $l(1)=2.$ 
Equations \eqref{rt1neq0}, \eqref{lt2}, and \eqref{lt}  
then follow from \eqref{firstrow}.

Clearly, if $r=1,$ we are done. Therefore for the rest of the proof we assume $r \ge 2.$
  
Further assume, using an induction assumption, that
\eqref{lt}, \eqref{lt2},  and \eqref{rt1neq0} 
using \eqref{ltkr}   are true for  
the cases $1,2,\dotsc,t -1$ for some $t$ satisfying 
\begin{equation}\label{2tr}
	2 \le t \le r.
\end{equation}
 To complete the proof we must using \eqref{ltkr} for the case $t,$ show that
\eqref{rt1neq0}, \eqref{lt2} and \eqref{lt} all hold for the case $t.$ 

\textbf{Proof of \eqref{lt} using \eqref{ltkr}.}
First note that by \eqref{ltkr}
\begin{equation}\label{ltt-1}
	l(t) = 2 + (t-1)q = l(t-1)+q.
\end{equation}
Let $i=t$ and $j=l(t)$ in \eqref{trianglerecursion}. Then by \eqref{ltt-1}, the last summand on the 
right-hand side  of \eqref{trianglerecursion} satisfies 
$-R_{i-1,j-q} = -R_{t-1,l(t)-q} = -R_{t-1,l(t-1)} \neq 0$ 
the last inequality following from the induction  assumption.

To complete the proof of \eqref{lt}, it suffices to show 
that the remaining summands on the right-hand side of \eqref{trianglerecursion}
 are equal to 0. This follows immediately from the induction assumption on \eqref{lt2} and by the observation
that $l(t-1)+q < c(r)$ which follows from \eqref{2tr}, \eqref{ltt-1}, and \eqref{cr}.

\textbf{Proof of \eqref{lt2}.}  Let $u$ satisfy 
\begin{equation}\label{B}
	l(t)+1 \le u \le c(r).
\end{equation}
Let $i=t$
and $j=u$ in \eqref{trianglerecursion}. We claim that all $q+1$ summands on the right-hand
side of \eqref{trianglerecursion} are 0. Indeed, these $q+1$ summands contain the following factors:
$$
	\{R_{t-1,k}: u-q \le k \le u\} \subset   \{R_{t-1,k}: 1+l(t-1) \le k \le c(r)\},
$$
the set inclusion following from \eqref{B} and \eqref{ltt-1}. 
The proof of \eqref{lt2} is completed by  the induction assumption on \eqref{lt2}.

\textbf{Proof \eqref{rt1neq0}}. In \eqref{trianglerecursion}, let $i=t$ and $j=1.$ 
 Then the first summand on the right-hand side of \eqref{trianglerecursion} satisfies 
$-(a_q-1)R_{t-1,1} \neq 0,$ the inequality following from \eqref{qap} and  
the induction assumption on \eqref{rt1neq0}.

To complete the proof of \eqref{rt1neq0} it suffices to show the remaining $q$ summands
on the right-hand side of \eqref{trianglerecursion} are 0.
 
By \eqref{boundary}, these $q$ summands contain factors
\begin{equation}\label{A}
	 \{R_{t-1,1-u}\}_{1 \le u \le q}  = \{R_{t-2,c+1-u}\}_{1 \le u \le q}.
\end{equation}
  
We claim these factors are all 0. To prove this we consider two cases.

\textbf{Case $t=2.$} 
That all rectangle elements on the right-hand side of \eqref{A} are zero follows
from \eqref{ivalues} and the observation that $2 \le c-q+1$ which follows from
\eqref{cr} and \eqref{2tr}.

\textbf{Case $t \ge 3.$}  
The fact that the rectangle elements 
 the right-hand side of \eqref{A} are 
all zero follows from our induction assumption on \eqref{lt2} for the case $t-2,$ and
the observation that  $l(t-2) +1 \le c-q+1$ which follows from \eqref{2tr}, \eqref{ltkr},
and \eqref{cr}.

The proof of the main theorem is complete.  Some descriptive corollaries are easy consequences.

\begin{corollary} With notations as above, for $1 \le t \le r,$
$$R_{t,1} = \biggl( - (a_q-1)\biggr)^{(t-1)}, \qquad R_{t,l(t)} = (-1)^t.$$
\end{corollary}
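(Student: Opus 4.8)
The plan is to observe that the corollary is not a new computation at all: the two displayed identities are exactly the solutions of one-step recurrences that already appear, essentially verbatim, inside the just-completed proofs of \eqref{rt1neq0} and \eqref{lt}. So the proof is a short induction on $t$ in each case.

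First I would handle $R_{t,1}$. Revisiting the proof of \eqref{rt1neq0}: putting $i=t,\ j=1$ in the $T$ recursion \eqref{trianglerecursion}, every summand on the right-hand side except the leading one $-(a_q-1)R_{t-1,1}$ carries a factor from the set $\{R_{t-1,1-u}\}_{1\le u\le q}=\{R_{t-2,c+1-u}\}_{1\le u\le q}$ of \eqref{A}, and that proof showed each of these factors is $0$ (split into the cases $t=2$ and $t\ge 3$ there). Hence \eqref{trianglerecursion} degenerates to $R_{t,1}=-(a_q-1)R_{t-1,1}$ for $2\le t\le r$. Combining this with the base value $R_{1,1}=1$ from \eqref{firstrow}, induction on $t$ gives $R_{t,1}=\bigl(-(a_q-1)\bigr)^{t-1}$.

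Next I would handle $R_{t,l(t)}$. Revisiting the proof of \eqref{lt}: putting $i=t,\ j=l(t)$ in \eqref{trianglerecursion} and using \eqref{ltt-1}, namely $l(t)=l(t-1)+q$, the final summand equals $-R_{t-1,l(t)-q}=-R_{t-1,l(t-1)}$, while every other summand has a factor $R_{t-1,k}$ with $l(t-1)<k\le c(r)$, hence is $0$ by the part \eqref{lt2} already established for row $t-1$. Thus \eqref{trianglerecursion} degenerates to $R_{t,l(t)}=-R_{t-1,l(t-1)}$ for $2\le t\le r$. Since $l(1)=2$ and $R_{1,2}=-1$ by \eqref{firstrow}, induction on $t$ yields $R_{t,l(t)}=(-1)^t$.

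I do not expect a genuine obstacle: this is bookkeeping on top of the finished main theorem. The only points to state carefully are that the two degenerate recurrences above are precisely the identities produced during the proofs of \eqref{rt1neq0} and \eqref{lt} (so the inductions are legitimate), and that the formulas are read correctly at $t=1$, where $\bigl(-(a_q-1)\bigr)^{0}=1=R_{1,1}$ and $(-1)^1=-1=R_{1,l(1)}$, in agreement with \eqref{firstrow}.
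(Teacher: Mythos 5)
Your proposal is correct and matches the paper's own proof, which likewise notes that the base case $t=1$ follows from \eqref{firstrow} and that inspecting the proofs of \eqref{rt1neq0} and \eqref{lt} yields the degenerate one-step recurrences $R_{t,1}=-(a_q-1)R_{t-1,1}$ and $R_{t,l(t)}=-R_{t-1,l(t-1)}$. Your write-up simply spells out the induction in slightly more detail than the paper does.
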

\begin{proof} Equation \eqref{firstrow} shows this corollary true for $t=1.$
Inspecting the proof presented above shows that for $t \ge 2,$ $R_{t,1} = R_{t-1,1}\times -(a_q-1)$ and 
$R_{t,l(t)} = R_{t-1,l(t-1)} \times -1.$
\end{proof}

\begin{corollary}
The sequence of the positions of the last non-zero element in the  rows of 
the rectangle appearing embedded in the recursive sequence of order $k(r)$ 
is an arithmetic sequence with common difference, $q.$
 $$\langle l(1), l(2),  \dotsc, l(r) \rangle = \langle 2, 2+q, 2+2q, \dotsc, 2+(r-1)q = c \rangle $$
\end{corollary}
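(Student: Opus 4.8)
The plan is to derive Corollary 5.4 directly from the statement of the main theorem together with Corollary 5.3, so that only a short assembly argument remains. By part (iv) of the main theorem, the function $l(t) = 2 + (t-1)q$ correctly identifies the rectangle-column position of the last non-zero entry in row $t$, for each $t$ with $1 \le t \le r$. Substituting the values $t = 1, 2, \dots, r$ into \eqref{ltkr} produces the finite sequence $\langle 2,\, 2+q,\, 2+2q,\, \dots,\, 2+(r-1)q\rangle$, which is by inspection an arithmetic progression with first term $2$ and common difference $q$. The final term is $2 + (r-1)q$, and by the definition \eqref{cr} of $c = c(r)$ this equals $c(r)$; hence the last entry of the sequence is exactly $c$, as claimed.

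The only remaining point is to record that this sequence is genuinely the sequence of positions of last non-zero elements, i.e. that the abstractly defined quantities $l(1), \dots, l(r)$ of Section 1 (characterized by \eqref{lt} and \eqref{lt2}) coincide with the explicit formula \eqref{ltkr}. But this is precisely the content of part (iv) of the main theorem, which has already been proved in Section 5; so I would simply cite it. One should also note in passing that the increments being all equal to $q \ge 2 > 0$ re-confirms that the sequence $l(1), \dots, l(r)$ is strictly increasing, which is the hypothesis needed in Definition 1.1 for a triangle to ``appear embedded'' — though strictly this is subsumed in part (v) of the theorem and need not be re-argued here.

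I do not anticipate any real obstacle: the corollary is purely a restatement of formula \eqref{ltkr} in the language of arithmetic sequences, and the proof is a one- or two-sentence observation. If anything, the only thing to be careful about is making explicit the identification $2 + (r-1)q = c$, which follows immediately from \eqref{cr}, and ensuring the indices run over the correct range $1 \le t \le r$ inherited from the hypothesis $r$ a positive integer in the theorem statement. Thus the proof I would write is essentially: ``By part (iv) of the Theorem, $l(t) = 2 + (t-1)q$ for $1 \le t \le r$; listing these values gives the displayed sequence, which is arithmetic with common difference $q$, and its last term is $2 + (r-1)q = c$ by \eqref{cr}.''
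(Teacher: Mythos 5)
Your proposal is correct and matches the paper's (implicit) argument: the paper offers no written proof, treating the corollary as an immediate consequence of part (iv) of the main theorem, which is exactly the citation you make. Reading off $l(t)=2+(t-1)q$ for $t=1,\dots,r$ and noting $2+(r-1)q=c$ via \eqref{cr} is all that is needed.
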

 
Table 3 nicely numerically illustrates both corollaries.

\section{conclusion}

In concluding the paper, we mention some frequently asked questions. The idea
of triangles embedded in recursions was first presented orally  at the 
West Coast Number Theory conference in 2017 during one of the problem sessions. 
The theory as presented in this paper evolved over the past three years at several
conferences including the Sarajevo conference on Fibonacci numbers and their applications.
Understandably, the same questions tend to be repeated. We single out two frequently 
asked questions.

People frequently ask what would happen if there were no $a_i, 1 \le i \le q,$ \eqref{qap}.
It turns out this was the initial case presented orally (as a sketch) in 2017. It was not
included in the main theorem to avoid introducing several necessary distinctions.

But the main results can be summarized as follows. First \eqref{kr} must be replaced by $k(r)=1+r,$ $r=1,2,3, \dotsc.$
Equations \eqref{cr} and \eqref{ltkr} are replaced by $c(r)=2+(r-1), l(t) = 2+(r-1).$
The main theorem can then be formulated in terms of the family of recursive sequences
associated with the Taylor series $T(X)= 2+ \frac{1}{1-X}.$ The main theorem would state
that for each $r,$ the associated recursion of order $k(r)$ appears to have a triangle 
(or an $r \times c(r)$ rectangle) embedded in it. Here, the associated recursion of order $k(r)$
is, as in this paper, the recursion corresponding to the approximating polynomial of $T(X),$ with initial values
given by \eqref{ivalues}. The corresponding triangle recursion is 
$R_{t,s}=2R_{t-1,s}-R_{t-1,s-1}.$ For $r=3$ the first three rows laid out in a 
$3 \times 4$ rectangle are presented in Figure 4.

\begin{center}
\begin{table}
\begin{tabular}{||c|c c c c ||}
\hline \hline
$\text{Position}$		&	1	&	2	&	3	&	4\\
\hline
$R_{1,1}, \dotsc, R_{1,c}$	&	1	&	-1	&	0	&	0\\
$R_{2,1}, \dotsc, R_{2,c}$	&	2	&	-3	&	1	&	0\\
$R_{3,1}, \dotsc, R_{3,c}$	&	4	&	-8	&	5	&	-1\\
\hline \hline
\end{tabular}
\begin{caption}{ $2 \times 4$ rectangle for the order $k(3)=4$ recursion} \end{caption}
\end{table}
\end{center}

A second frequently question asked is whether there are examples of the general theory
with a Taylor series not involving a geometric series. This is an open question. One
can take any Taylor series based on a rational function and develop a family of recursive sequences and view the
rectangles that appear embedded in them which do obey the triangle recursion presented
 in Proposition 1.4 (with appropriate adjustments for the denominator of the rational function).
The catch is there are no triangles in these rectangles because the delimiting zeroes are
not present. 

This observation illuminates the main theorem presented in this paper. The coefficients 
in the Taylor series $T(X)$ must avoid any rapid increase so that the associated recursive
sequences have sufficient zeroes in them. Thus, it remains an open problem to find examples
 of the theory presented in this paper based on a Taylor series without a geometric series
component.

\medskip

\noindent MSC2010: 11B39, 33C05

\end{document}